\newtheorem*{theo}{Theorem}
\newtheorem{prop}{Proposition}
\newtheorem{lem}{Lemma}
\newtheorem*{cor}{Corollary}
\newcommand\bbN{\mathbb N}
\newcommand\bbR{\mathbb R}
\newcommand\bbS{\mathbb S}
\newcommand\bbZ{\mathbb Z}
\newcommand\CIc{{\mathcal{C}}^{\infty}_c}
\newcommand\CI{{\mathcal{C}}^{\infty}}
\newcommand\CmI{{\mathcal{C}}^{-\infty}}
\newcommand\Id{\operatorname{Id}}
\newcommand\Lap{\varDelta}
\newcommand\dCI{\dot{\mathcal{C}}^{\infty}}
\newcommand\ha{\frac{1}{2}}
\newcommand\loc{\operatorname{loc}}
\newcommand\nov[1]{{\frac{n}{#1}}}
\newcommand\supp{\operatorname{supp}}
\newcommand\Conr{\frac{1}{2(2\pi)^{n-1}}}
\newcommand\Conhr{2^{\ha}(2\pi)^{\frac{n-1}{2}}}
\newcommand\dH[1]{\dot{H}^{#1}}
\newcommand\fha{\frac{5}{2}}
\newcommand\hnm[1]{\frac{n-#1}{2}}
\newcommand\hnp[1]{\frac{n+#1}{2}}
\newcommand\LP{\operatorname{LP}}
\newcommand\singsupp{\operatorname{sing supp}}
\newcommand\centerpcl[3]{\vskip#2\relax\centerline{\hbox to#1{\special
  {pcl:#3}\hfil}}}
\newcommand\Hyp{\operatorname{HS}}
\newcommand\datver[1]{\def\datverp%
 {\par\boxed{\boxed{\text{Version: #1; Run: \today}}}}}
\begin{document}
\title[Generalized backscattering]
{Generalized backscattering and the Lax-Phillips transform}

\author{Richard Melrose AND Gunther Uhlmann}
\address{Department of Mathematics, Massachusetts Institute of Technology}
\email{rbm@math.mit.edu}
\address{Department of Mathematics, University of Washington}
\email{gunther@math.washington.edu}
\dedicatory{Dedicated to Vesselin Petkov on the occasion of his 65th birthday}
\begin{abstract} Using the free-space translation representation (modified
  Radon transform) of Lax and Phillips in odd dimensions, it is shown that
  the generalized backscattering transform (so outgoing angle $\omega
  =S\theta$ in terms of the incoming angle with $S$ orthogonal and $\Id-S$
  invertible) may be further restricted to give an entire, globally
  Fredholm, operator on appropriate Sobolev spaces of potentials with
  compact support. As a corollary we show that the modified backscattering
  map is a local isomorphism near elements of a generic set of
  potentials.
\end{abstract}
\maketitle

\tableofcontents

\section*{Introduction}

The inverse scattering problem in the two body case consists of determining
a potential $V$ by measuring the scattering amplitude $a_V(\lambda,\omega,
\theta)$ where $\lambda$ denotes the frequency of an incoming plane wave
with direction $\omega$ and $\theta$ denotes the outgoing direction. This
is an overdetermined problem except in dimension one. In this note we
consider determined problems where the set of possible angles $\theta$ and
$\omega$ is restricted to an $n-1$ dimensional subset of the complement of
the diagonal in the product of the sphere with itself. We use the time
dependent approach to scattering of Lax-Phillips \cite{LP}, \cite{P}. This
is a based on the classical wave equation rather than the time dependent or
stationary Schr\"odinger equation and therefore allows the properties of
the wave equation, especially the finite speed of propagation of the
solutions and the precise description on the propagation of singularities, to be
effectively exploited. In particular the Lax-Phillips modified Radon
transform (their free-space translation-representation), reduces the
n-dimensional problem to a one dimensional problem with lower order term
arising from the potential.

If $S$ is an n-dimensional orthogonal transformation such that $\Id-S$ is
invertible, then the modified backscattering transform determined by $S,$
for a potential $V,$ is obtained by composing the restriction of the
scattering kernel $\kappa_V(s,\omega , \theta)$ (the inverse
Fourier-Laplace transform in $\lambda$ of the scattering amplitude) to
$\omega =S\theta $ with a linear map $L_S$ (the generalized inverse of the
linearization of the map at $V=0).$ In the Main Theorem in Section 3, it is
shown that if $\dH{\hnp1}(B(\rho))$ is the Sobolev space of functions with
support in the closed ball of radius $\rho$ then
\begin{equation}
\dH{\hnp1}(B(\rho))\ni V\longmapsto
L_S(\kappa_V(s,S\theta,\theta))\in\dH{\hnp1}(B(\rho))
\label{BackScat.1}\end{equation}
is an entire and globally Fredholm non-linear map of index zero. Indeed
this map is a local isomorphism near potentials forming an open set with
complement of codimension at least two (see Proposition 2 in Section 5).

Related results, in a slightly different setting for the true
backscattering, $S=-\Id$ but including two dimensions and non-compact
supports, have been obtained by Eskin and Ralston \cite{ER1,ER2, ER3}. A
different method to prove generic uniqueness was given in \cite{S} in
dimension $3$ for compactly supported potentials. The use of hyperbolic
equations for the inverse backscattering problem has also been considered
in several papers; see for instance \cite{BLM}, \cite{GU}, \cite{Mo},
\cite{StU}.  The lecture notes \cite{MU} contain most of what we do
here. In \cite{M}, \cite{U}, we gave a sketch of the proof of the main
Theorem here for the case $S=-\Id$. The case of even dimensions $n>2$, also
for $S=-\Id$, using similar methods to \cite{MU}, \cite{M} and \cite{U},
was considered in \cite{W}. Melin has developed an alternative approach to
the inverse backscattering problem using the ultrahyperbolic equation
\cite{Me1}, \cite{Me2}.

We leave open the question of whether a map such as \eqref{BackScat.1} is a
global isomorphism, or a local isomorphism near each potential. The problem
of determining partial information of the potential, especially its
singularities, from backscattering or other (formally) determined
information has been considered in the papers \cite{GU}, \cite{J},
\cite{OPS}, \cite{SuU}, \cite{R}, \cite{RV} and in the recent preprint
\cite{BM}.

The authors thank the anonymous referee and Jeff Lagarias for comments on
the manuscript.

\section{Lax Phillips transform}

We briefly recall the approach of Lax and Phillips to scattering theory in
odd-dimensional Euclidean space. Since it suffices for the present problem
we give a simplified formulation of their theory.

The Lax Phillips theory is founded on the Radon transform: 
\begin{equation}
\begin{gathered}
Rf(s,\omega)=\int\limits_{\Hyp(s,\omega)}f(x)dH_x,\\
R:\CIc(\bbR^n)\longrightarrow\CIc(\bbR\times\bbS^{n-1})
\end{gathered}
\label{BackScat.2}\end{equation}
where $H_x$ is surface measure on $\Hyp(s,\omega)=\{x\cdot\omega =s\}.$ The
formal transpose, $R^t,$ of $R$ is given by 
\begin{equation}
\begin{gathered}
R^t:\CIc(\bbR\times\bbS^{n-1})\longrightarrow\CI(\bbR^n),\\
R^tg(x)=\int\limits_{\bbS^{n-1}}g(x\cdot\omega,\omega)d\omega.
\end{gathered}
\label{BackScat.3}\end{equation}

Of particular importance here is the fact that the Radon transform intertwines
the $n$-dimensional and the one-dimensional Laplacians (for any $n\ge2)$
\begin{equation}
R\Lap f=D^2_s Rf\quad\forall\ f\in\CIc(\bbR^n)
\label{BackScat.4}\end{equation}
where $\Delta$ is the positive Laplacian and $D_s=\frac{1}{i}\partial_s.$
Moreover there is an inversion and a Plancherel formula; for any $f,$
$g\in \CIc(\bbR^n)$ 
\begin{equation}
\begin{gathered}
f=\frac{1}{2(2\pi)^{n-1}}R^t(|D_s|^{n-1}Rf),\\
\int\limits_{\bbR^{n}}f(x)\overline{g(x)}dx=\ha\frac{1}{(2\pi)^{n-1}}
\int\limits_{\bbR\times\bbS^{n-1}}(|D_s|^{\hnm1}Rf)(s,\omega)
\overline{(|D_s|^{\hnm1}Rg)(s,\omega)}dsd\omega.
\end{gathered}
\label{BackScat.5}\end{equation}

The range of $R$ on $\CIc(\bbR^n)$ was characterized by Helgason in
\cite{Hel}. Its closure in an appropriate topology is simpler. Thus if,
$n\ge3$ is odd the operator $D_s^{\hnm1}\cdot R$ extends by continuity 
to an isometric isomorphism  
\begin{equation}
R_n=D_s^{\hnm1}\cdot R:L^2(\bbR^{n})\longrightarrow
\{k\in L^2(\bbR\times\bbS^{n-1});g(-s,-\omega)=(-1)^{\hnm1}g(s,\omega)\}
\label{BackScat.22}\end{equation}
and $R^t\cdot D_s^{\hnm1}$ extends by continuity to be its inverse.

The modified Radon transform of Lax and Phillips is defined to be
\begin{equation}
\LP\binom{u_0}{u_1}=\Conhr\left\{D_s^{\hnp1}(Ru_0)(s,\omega)
-D_s^{\hnm1}(Ru_1)(s,\omega)\right\}.
\label{BackScat.6}\end{equation}
For $n\ge 3$ odd it is an injective map
\begin{equation}
\LP:\CIc(\bbR^n)\times\CIc(\bbR^n)\longrightarrow\CIc(\bbR\times\bbS^{n-1})
\label{8.32}\end{equation}
which intertwines the free wave group and the translation group: 
\begin{equation}
\begin{gathered}
\LP\cdot U_0(t)=T_t\cdot LP,\ T_tv(s,\omega)=v(s-t,\omega),\\
U_0(t)\binom{u_0}{u_1}=\binom{u(t)}{D_tu(t)},\
(D_t^2-\Lap)u(t)=0,\ u(0)=u_0,\ D_tu(0)=u_1.
\end{gathered}
\label{BackScat.7}\end{equation}
In particular, if $u$ is the solution of the Cauchy problem for the wave
equation as in \eqref{BackScat.7} and
\begin{equation}
k(t,s,\omega)=LP\cdot
U_0(t)\binom{u_0}{u_1}\in\CI(\bbR\times\bbR\times\bbS^{n-1})
\label{8.38}\end{equation}
then $k(t,s,\omega)=k_0(s-t,\omega)$ is a solution of the first order
differential equation 
\begin{equation}
(D_t+D_s)k(t,s,\omega)=0 \text{ in }\bbR\times\bbR\times\bbS^{n-1}.
\label{BackScat.8}\end{equation}

This is in essence the free-space translation representation of Lax and
Phillips. Rather than adopting their approach of constructing a perturbed
translation representation for the wave equation with potential we use the
same `free' Lax Phillips transform and observe its effect on the solution
to the perturbed Cauchy problem
\begin{equation}
\begin{gathered}
U_V(t)\binom{u_0}{u_1}=\binom{u(t)}{D_tu(t)},\\
(D_t^2-\Lap -V(x))u(t)=0,\ u(0)=u_0,\ D_tu(0)=u_1,
\end{gathered}
\label{BackScat.9}\end{equation}
where $V\in\CIc(\bbR^n).$ Namely if 
\begin{equation}
k_V(s,t,\omega)=
LP\cdot U_V(t)\binom{u_0}{u_1}\in\CI(\bbR\times\bbR\times\bbS^{n-1}),
\label{BackScat.10}\end{equation}
then
\begin{equation}
\begin{aligned}
&(D_t+D_s)k_V(t,s,\omega)\\
&=\Conhr\left\{-D_s^{\hnm1}RD_t^2u(t,\cdot)+D_s^{\hnp3}
Ru(t,\cdot)\right\}\\
&=-\Conhr D_s^{\hnm1}R[V(\cdot)u(t,\cdot)].
\end{aligned} 
\label{BackScat.11}\end{equation}
Using the inversion formula it follows that 
\begin{equation}
(D_t+D_s)k_V(t,s,\omega)+V_{\LP}k_V(t,s,\omega)=0
\label{BackScat.12}\end{equation}
where $V_{\LP}$ is an operator on $\CI(\bbR\times\bbS^{n-1}):$ 
\begin{equation}
V_{\LP}=\Conr D_s^{\hnm1}\cdot R\cdot V\cdot R^t D^{\hnm3}_s.
\label{BackScat.13}\end{equation}
Thus if $\supp(V)\subset\{|x|\le\rho\}$ the operator $V_{\LP}$
defined by \eqref{BackScat.13} has Schwartz kernel $V_{\LP}(s,\omega,s',\omega')$
supported in the region 
\begin{equation}
\supp(V_{\LP})\subset\left\{(s,\omega',s',\omega )\in
\bbR\times\bbS^{n-1}\times\bbR\times\bbS^{n-1};|s|,|s'|\le\rho\right\}.
\label{BackScat.14}\end{equation}

There is a unique fundamental solution, which is to say a distribution satisfying 
\begin{equation}
\begin{gathered}(D_t+D_s+V_{\LP})E_{\LP}(t,s,\omega;s',\theta)=0\\
E_{\LP}(0,s,\omega;s',\theta)=\delta(s-s')\delta_\theta(\omega).
\end{gathered}
\label{BackScat.15}\end{equation}
Standard properties of the wave equation imply that
\begin{equation}
\begin{gathered}
\singsupp(E_{\LP})\subset\\
\{s'-s+t=0,\theta=\omega\}\cup
\{s'+s+t=0,\theta=-\omega,|s|\le\rho,|s'|\le\rho\}.
\end{gathered}
\label{BackScat.16}\end{equation}

>From this it follows that the continuation problem can also be solved, so
for each $\theta\in\bbS^{n-1}$ there is a unique distribution
\begin{equation}
\alpha(t,s,\omega,\theta)\in
\CmI(\bbR\times\bbR\times\bbS^{n-1}\times\bbS^{n-1}),
\label{BackScat.17}\end{equation}
satisfying 
\begin{equation}
\begin{gathered}
(D_t+D_s)\alpha+V_{\LP}\alpha=0 \text{ in
  }\bbR\times\bbR\times\bbS^{n-1}\text{ and}\\
\alpha(t,s,\omega;\theta)=\delta(s-t)\delta_\theta(\omega)\text{ in }t<-\rho
\end{gathered}
\label{BackScat.19}\end{equation}
where $\rho=\sup\{|x|;x\in\supp(V)\}.$

It follows that 
\begin{equation}
\alpha(t,s,\omega;\theta)=\kappa_V(t-s,\omega;\theta)\text{ in }s>\rho
\label{BackScat.20}\end{equation}
where $\kappa_V\in\CmI(\bbR\times\bbS^{n-1}\times\bbS^{n-1})$ is the\/ {\it
 scattering kernel}. Here one can think of $\alpha$ as the free wave 
\begin{equation}
\alpha_0(t,s,\omega;\theta)=\delta(s-t)\delta_\theta(\omega)
\label{BackScat.21}\end{equation}
propagating in from the left and striking the `potential' which is confined to
the region $|s|\le\rho.$ Once it has passed through the potential it again
freely propagates to the right. Thus the kernel $\kappa_V(t,\omega;\theta)$
represents the end result of the interaction. 

The scattering amplitude in the ordinary sense is the Fourier-Laplace
transform of $\kappa_V$ continued to the real axis. We define the
generalized backscattering transform below directly from $\kappa_V.$

\section{Sobolev bounds}

We will consider potentials $V$ with fixed support and finite Sobolev
regularity. So, for $\rho\in(0,\infty),$ set
\begin{multline}
\dH{\hnp1}(B(\rho))=\{V\in L^2(\bbR^n); V(x)=0 \text{ in }|x|>\rho,\\
D^\alpha V\in L^2\ \forall\ |\alpha|\le\hnp1\}.
\label{12.3}\end{multline}

The choice of Sobolev order here is not critical; it is convenient that
$\hnp1$ is an integer and rather more important that $\hnp1>\nov2.$ The
latter condition means that $\dH{\hnp1}B(\rho))$ is an {\it algebra}. In
fact the usual Sobolev spaces are then modules over these for an
appropriate range of orders.

\begin{lem}\label{12.4} (Gagliardo-Nirenberg, see \cite{Ad})\ For
any $k\in\bbN$ with $k>n/2$ and any $s\in\bbR$ satisfying $-k\le s\le k$
\begin{equation}H^k(\bbR^n)\cdot H^s(\bbR^n)\subset H^s(\bbR^n).
\label{12.5}\end{equation}
In particular, if $s\in\bbR$ and $-\hnp1\le s\le\hnp1,$ then
\begin{equation}
\dH{\hnp1}(B(\rho))\cdot
H^s(\bbR^n)\subset\dH s(B(\rho)).
\label{12.6}\end{equation}
\end{lem}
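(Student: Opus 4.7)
The plan is to reduce the lemma to three elementary pieces: (i) the algebra property at $s=k$, (ii) extension to $0\le s\le k$ by Leibniz, and (iii) extension to $-k\le s\le 0$ by duality; the support clause in \eqref{12.6} then follows trivially.

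For step (i), I would use the Sobolev embedding $H^k(\bbR^n)\hookrightarrow L^\infty(\bbR^n)$, valid since $k>n/2$, together with the standard Gagliardo--Nirenberg interpolation inequality
\begin{equation*}
\|D^\beta u\|_{L^{p_\beta}}\le C\,\|u\|_{H^k}^{|\beta|/k}\,\|u\|_{L^\infty}^{1-|\beta|/k},\quad \tfrac{1}{p_\beta}=\tfrac{|\beta|}{2k}.
\end{equation*}
Applying Leibniz, $D^\alpha(fg)=\sum_{\beta\le\alpha}\binom{\alpha}{\beta}D^\beta f\, D^{\alpha-\beta}g$, and combining each term via H\"older with the conjugate exponents $p_\beta,p_{\alpha-\beta}$ (arranged so $1/p_\beta+1/p_{\alpha-\beta}=1/2$) shows $\|fg\|_{H^k}\le C\|f\|_{H^k}\|g\|_{H^k}$.

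For step (ii), assume for now that $s\in\bbN$ with $0\le s\le k$. The same Leibniz expansion applied to $D^\alpha(fu)$ with $|\alpha|\le s$ splits each term as $D^\beta f\cdot D^{\alpha-\beta}u$; one factor is controlled in an $L^{p}$-Sobolev norm by $\|f\|_{H^k}$ (using $k>n/2$ to keep $D^\beta f\in L^\infty$ whenever $|\beta|\le s$ is small enough, and Gagliardo--Nirenberg otherwise), while the other is handled by $\|u\|_{H^s}$. Non-integer $s\in[0,k]$ are then obtained by real interpolation between the endpoint cases $s=0$ (just $H^k\hookrightarrow L^\infty$) and $s=k$.

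For step (iii), I would argue by duality. Given $f\in H^k$ and $u\in H^s$ with $-k\le s<0$, the product $fu$ makes sense as a distribution; pairing against $\phi\in\CIc(\bbR^n)$ gives $\langle fu,\phi\rangle=\langle u,\bar f\phi\rangle$. Since $0\le -s\le k$, step (ii) applied with $-s$ in place of $s$ yields $\|\bar f\phi\|_{H^{-s}}\le C\|f\|_{H^k}\|\phi\|_{H^{-s}}$, whence $|\langle fu,\phi\rangle|\le\|u\|_{H^s}\|\bar f\phi\|_{H^{-s}}\le C\|f\|_{H^k}\|u\|_{H^s}\|\phi\|_{H^{-s}}$. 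Taking the supremum over $\phi$ identifies $fu\in H^s$ with the stated bound. Finally, if $V\in\dH{\hnp1}(B(\rho))$ then $\supp(Vu)\subset\supp(V)\subset B(\rho)$, giving \eqref{12.6}. The main technical nuisance is only the non-integer case in step (ii), which I dispose of by interpolation rather than a direct Littlewood--Paley argument to keep the exposition self-contained.
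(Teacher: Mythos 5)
The paper does not actually prove this lemma: it is quoted as a known result with a citation to Adams's \emph{Sobolev Spaces}, and the text moves straight on to Lemma 2. So the only question is whether your self-contained argument is sound, and in outline it is: the three-step structure (algebra property of $H^k(\bbR^n)$ for $k>n/2$, module property for $0\le s\le k$, duality for $-k\le s\le 0$) is the classical route to this Moser-type estimate, and the duality step correctly exploits the symmetry of the statement under $s\mapsto -s$. Two points need tightening. First, in step (i) your H\"older exponents give $1/p_\beta+1/p_{\alpha-\beta}=|\alpha|/(2k)$, which equals $1/2$ only for the top-order terms $|\alpha|=k$; for $|\alpha|<k$ H\"older lands in $L^q$ with $q>2$, which does not embed into $L^2(\bbR^n)$. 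This is routine to repair (adapt the interpolation exponents to $|\alpha|$ rather than $k$, or place $D^\beta f$ in $L^\infty$ when $k-|\beta|>n/2$ and in the Sobolev-conjugate Lebesgue space otherwise), but as written the lower-order terms are not literally controlled. Second, in step (ii) the interpolation of the fixed multiplier $u\mapsto fu$ between the endpoints $s=0$ and $s=k$ already yields the whole range $0\le s\le k$, so the integer-$s$ Leibniz discussion is redundant; if you retain the interpolation route, note explicitly that the real interpolation space $(L^2,H^k)_{\theta,2}$ is $H^{\theta k}$ (these are Hilbert--Sobolev spaces, so this is immediate), which is what the conclusion requires. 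Finally, for $s<0$ the product $Vu$ is only defined by continuity from smooth approximations, so the support claim in \eqref{12.6} should be observed to pass to the limit; this is a one-line remark but worth making.
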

\noindent

\begin{lem}\label{12.7} For any $k$ ($\in\bbZ$ for simplicity) the
normalized Radon transform in \eqref{BackScat.22} gives a bounded map
\begin{equation}\begin{aligned}
R_n:\dH k(B(\rho))\longrightarrow\dH k([-\rho,\rho ]\times\bbS^{n-1})
=\{u\in & H^k(\bbR\times\bbS^{n-1});\\
& u(s,\theta)=0\text{ in }|s|>\rho \}.\end{aligned}
\label{12.8}\end{equation}
\end{lem}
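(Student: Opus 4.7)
The case $k=0$ is immediate from \eqref{BackScat.22}, once one observes that if $V$ is supported in $\overline{B(\rho)}$ then the hyperplane $\Hyp(s,\omega)$ misses $\supp(V)$ for $|s|>\rho$, so from \eqref{BackScat.2} $RV$, and hence $R_nV=D_s^{\hnm1}RV$, vanishes there. The substance lies in positive $k$, with negative $k$ to follow by duality.

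For positive integer $k$ my plan is to use the characterization of $H^k(\bbR\times\bbS^{n-1})$ by $L^2$-boundedness of all products $D_s^a\tilde X_{i_1j_1}\cdots\tilde X_{i_bj_b}$ with $a+b\le k$, where $\tilde X_{ij}=\omega_i\partial_{\omega_j}-\omega_j\partial_{\omega_i}$ are the spherical rotation vector fields (whose values span $T\bbS^{n-1}$ everywhere). The Radon transform intertwines the Euclidean rotations $X_{ij}=x_i\partial_j-x_j\partial_i$ with $\tilde X_{ij}$, by direct change of variable in \eqref{BackScat.2}, and each $\tilde X_{ij}$ commutes with $D_s$; hence
\begin{equation*}
D_s^a\tilde X_{i_1j_1}\cdots\tilde X_{i_bj_b}R_nV \;=\; \pm\, D_s^aR_n\bigl(X_{i_1j_1}\cdots X_{i_bj_b}V\bigr),
\end{equation*}
with the inner factor still in $\dH{k-b}(B(\rho))$ because the $X_{ij}$ have smooth coefficients and preserve $\overline{B(\rho)}$. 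It then suffices to show $\|D_s^aR_nW\|_{L^2}\lesssim\|W\|_{H^a}$ for $a\le k$ and $W$ compactly supported; for this I will use the Fourier-slice identity $\widehat{RW}(\tau,\omega)=\hat W(\tau\omega)$ and Plancherel in $s$, then pass to polar coordinates:
\begin{equation*}
\|D_s^aR_nW\|_{L^2}^2 \;\propto\; \int_{\bbR\times\bbS^{n-1}}|\tau|^{2a+n-1}|\hat W(\tau\omega)|^2\,d\tau\,d\omega \;\propto\; \int_{\bbR^n}|\xi|^{2a}|\hat W(\xi)|^2\,d\xi \;\le\; \|W\|_{H^a}^2.
\end{equation*}
This produces $\|R_nV\|_{H^k}\lesssim\|V\|_{\dH k(B(\rho))}$, with $R_nV$ supported in $[-\rho,\rho]\times\bbS^{n-1}$. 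For negative integer $k$ I plan to dualize: $R_n$ is an $L^2$-isometric isomorphism whose inverse is $R^tD_s^{\hnm1}$, and the same argument applied to this inverse (on the sphere side) transposes to the required bound on $\dH k$ for negative $k$.

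The main obstacle I anticipate is the half-integer weight $D_s^{\hnm1}$ built into $R_n$. A clean induction using only the intertwiner $R\Lap=D_s^2R$ of \eqref{BackScat.4} would cover even orders of $D_s$ but leave odd orders awkward, and this is what forces the argument to pass through the Fourier-slice computation (equivalently, through the Plancherel identity \eqref{BackScat.5}) to treat all $a$ uniformly. Everything else — the rotational step, the preservation of support, and the negative-$k$ duality — is then routine.
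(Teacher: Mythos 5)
Your proof is correct, and while the skeleton (reduce to $L^2$ by commuting vector fields through $R_n$, then dualize for $k<0$) matches the paper's, the two workhorse identities are genuinely different. For the $s$-derivatives the paper never invokes the Fourier-slice theorem: it iterates the intertwining $D_s^2R_nf=R_n\Lap f$ of \eqref{BackScat.4} together with the partial-isometry identity $\langle R_nf,D_s^2R_nf\rangle=\langle\Lap f,f\rangle=\|D_sR_nf\|_{L^2}^2$, which is precisely how it extracts the \emph{odd} orders of $D_s$ that you correctly identified as the obstacle; your polar-coordinate Plancherel computation handles all orders $a$ in one stroke and is arguably cleaner, at the cost of redoing the $k=0$ isometry rather than quoting it. For the tangential derivatives the paper uses the identity $WR_nf=\sum_jq_j(\theta)D_sR_n(x_jf)$ with $W(x\cdot\theta)=\sum_jx_jq_j(\theta)$, valid for an \emph{arbitrary} smooth vector field $W$ on the sphere and trading a tangential derivative for an $s$-derivative of $R_n(x_jf)$, whereas you use rotation equivariance $\tilde X_{ij}R_nf=\pm R_n(X_{ij}f)$, trading it for a derivative of $f$ itself; both cost exactly one order of regularity of $f$ and both rely on the $X_{ij}$ (resp.\ the multipliers $x_j$) preserving $\overline{B(\rho)}$. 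Your duality step for $k<0$ is the same as the paper's, though stated tersely: to make it airtight you should record, as the paper does in \eqref{12.13}--\eqref{12.16}, that $R_n^t$ is bounded on positive-order spaces by direct differentiation (no tangential regularity of $u$ being needed there), and then pair against test functions.
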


\begin{proof} For $k=0,$ this is \eqref{BackScat.22} which is a
consequence of the $L^2$ boundedness of the Fourier transform. Consider the
case $k>0.$ We know that $R$ (and hence $R_n$) intertwines $\Lap$ with
$D^2_s.$ Thus if $f\in\CIc(\bbR^n)$ then
\begin{equation}
D^2_sR_n f=R_n\Lap f.
\label{12.9}\end{equation}
Since $R_n$ is a partial isometry on $L^2,$
\begin{equation}
\langle R_nf, D^2_s R_n f\rangle_{L^{2}}=
\langle\Lap f,f\rangle.
\label{12.10}\end{equation}
By continuity then, $f\in\dH1(B(\rho))\Longrightarrow D_sR_nf\in L^2.$
Repeating this argument a finite number of times shows that
\begin{equation}
f\in\dH k(B(\rho))\Longrightarrow D_s^j R_n f\in
L^2([-\rho,\rho]\times\bbS^{n-1})\quad0\le j\le k.
\label{12.11}\end{equation}
To get tangential regularity, suppose that $W$ is a $\CI$ vector field on the
sphere. Then
\begin{equation}
\begin{aligned}
&WR_n f(s,\theta)=c_n D_s^{\hnm1} W\int\delta(s-x\cdot\theta)f(x)dx\\
&=\sum\limits_{j=1}^n q_j(\theta)D_sR_n(x_jf),\quad
W(x\cdot\theta)=\sum\limits_{j=1}^nx_jq_j(\theta).
\end{aligned}
\label{12.12}\end{equation}
Thus $WR_nf\in L^2.$ Repeating this argument we conclude that \eqref{12.8}
holds for $k\ge 0.$

The same type of argument applies to $R^t_n.$ Thus
\begin{equation}
R_n^t u(x)=c_n\int\limits_{\bbS^{n-1}}\delta(s-x\cdot\omega)
D_s^{\hnm1} u(s,\omega)ds
\label{12.13}\end{equation}
is bounded from $L^2([-\rho,\rho ]\times\bbS^{n-1})$ into $L^2(B(\rho)).$
Direct differentiation therefore shows that it is bounded from
$H^k([-\rho,\rho ]\times\bbS^{n-1})$ into $H^k(B(\rho))$ for $k\in\bbN.$ By
duality it follows that \eqref{12.8} holds for $k\in-\bbN,$ and
hence for all $k\in\bbZ$ as claimed.
\end{proof}

Note that, from the proof above,
\begin{gather}
{\begin{aligned}
R^t:\{u\in\CmI(\bbR\times\bbS^{n-1});&
D^j_s u\in L^2_{\loc}(\bbR\times\bbS^{n-1}),\ 0\le j\le k\}\\
&\longrightarrow H^k(B(\rho)) \text{ if } k\ge 0, \text{ and }
\end{aligned}}
\label{12.15}\\
{\begin{aligned}
R^t:\{u\in\CmI(\bbR\times\bbS^{n-1});&u\in L^2_{\loc}(\bbR\times\bbS^{n-1})+
D^{-k}_s L^2(\bbR\times\bbS^{n-1})\}\\
&\longrightarrow H^k(B(\rho))\text{ if } k\le 0.
\end{aligned}}
\label{12.16}
\end{gather}
That is, one does not need tangential regularity to ensure the regularity of
$R_n^tf$ in a compact set.

\begin{lem}\label{12.17} For any $k\in\bbZ$ satisfying
$\hnm1\ge k\ge-\hnp3,$ and any potential $V\in\dH{\hnp1}(B(\rho)),$
$V_{\LP}$ gives a bounded map
\begin{equation}V_{\LP}: H^k(\bbR\times\bbS^{n-1})\longrightarrow
H^{k+1}(\bbR\times\bbS^{n-1}).
\label{12.18}\end{equation}
\end{lem}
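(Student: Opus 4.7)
The plan is to make the one-derivative gain manifest through the factorization
\begin{equation*}
V_{\LP}=\Conr D_s^{\hnm1}R\cdot V\cdot R^tD_s^{\hnm3}=\Conr R_n\cdot V\cdot R_n^tD_s^{-1},
\end{equation*}
where $R_n=D_s^{\hnm1}R$ and $R_n^t=R^tD_s^{\hnm1}$; the identity $\hnm1-\hnm3=1$ is what converts $R^tD_s^{\hnm3}$ into $R_n^tD_s^{-1}$. The primitive $D_s^{-1}$ is only defined modulo constants in $s$, but this ambiguity is killed by $R_n^t$, since the factor $D_s^{\hnm1}$ annihilates functions constant in $s$ as soon as $n\ge3$. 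Moreover, \eqref{BackScat.14} says that only the restriction of the input to $|s|\le\rho$ enters the composition, so one may assume the input is compactly supported in $s$.

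The core estimate is that $R_n^tD_s^{-1}\colon H^k(\bbR\times\bbS^{n-1})\to H^{k+1}(B(\rho))$ is bounded. I would prove this by Plancherel, using the Fourier description of $R_n^t$ that underlies the isometry \eqref{BackScat.22}, namely $\widehat{R_n^tg}(\xi)=c|\xi|^{-\hnm1}\hat g(|\xi|,\xi/|\xi|)$ (modulo parity), which yields $\widehat{R_n^tD_s^{-1}u}(\xi)=c'|\xi|^{-\hnp1}\hat u(|\xi|,\xi/|\xi|)$. Passing to polar coordinates $d\xi=\sigma^{n-1}\,d\sigma\,d\omega$, the factor $\sigma^{-(n+1)}$ absorbs $\sigma^{n-1}$, leaving only a $\sigma^{-2}$ low-frequency singularity, which is harmless after a cutoff supported in a neighbourhood of $B(\rho)$. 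This refinement goes one derivative beyond what \eqref{12.15}--\eqref{12.16} produce on their own.

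Given $R_n^tD_s^{-1}u\in H^{k+1}(B(\rho))$, the remainder is routine. Lemma~\ref{12.4} sends $H^{k+1}(B(\rho))$ into $\dH{k+1}(B(\rho))$ upon multiplication by $V\in\dH{\hnp1}(B(\rho))$ provided $|k+1|\le\hnp1$, which is exactly the hypothesis $-\hnp3\le k\le\hnm1$; Lemma~\ref{12.7} then maps $\dH{k+1}(B(\rho))$ into $\dH{k+1}([-\rho,\rho]\times\bbS^{n-1})\subset H^{k+1}(\bbR\times\bbS^{n-1})$ via $R_n$. The main obstacle is the Plancherel step, where one must reconcile the formally singular Fourier symbol $|\xi|^{-\hnp1}$ with the compactly supported output, using the localization from \eqref{BackScat.14} to control the low-frequency behaviour of $D_s^{-1}$.
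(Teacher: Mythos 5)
Your overall strategy coincides with the paper's: the same factorization $V_{\LP}=c_n^2\,D_s^{\hnm1}R\cdot V\cdot R^tD_s^{\hnm3}$, the same identification of $R^tD_s^{\hnm3}\colon H^k(\bbR\times\bbS^{n-1})\to H^{k+1}(B(\rho))$ as the step where the derivative is gained (this is exactly \eqref{12.19}), and the same use of Lemma~\ref{12.4} for the multiplication by $V$ and of Lemma~\ref{12.7} for the outer factor $D_s^{\hnm1}R$; the index check $|k+1|\le\hnp1\Leftrightarrow-\hnp3\le k\le\hnm1$ is also correct. The only divergence is in how the core estimate is justified: the paper invokes \eqref{12.15}--\eqref{12.16}, i.e.\ the fact (obtained from $L^2$ boundedness of $R_n^t$ on compact sets, direct differentiation $D_x^\alpha R^tu=R^t(\omega^\alpha D_s^{|\alpha|}u)$, and duality for negative orders) that only $s$-regularity of the argument is needed to control all derivatives of $R^t(\cdot)$ on $B(\rho)$, whereas you compute the Euclidean Fourier transform and pass to polar coordinates.

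The gap is in your treatment of the resulting low-frequency factor. Plancherel gives, up to the parity symmetrization, $\|R^tD_s^{\hnm3}g\|^2_{H^{k+1}(\bbR^n)}=c\int(1+\sigma^2)^{k+1}\sigma^{-2}|\hat g(\sigma,\omega)|^2\,d\sigma\,d\omega$, and this integral is genuinely divergent for generic $g$: no cutoff of the input in $s$ removes the $\sigma^{-2}$ singularity, because $\hat g(0,\omega)=\int g(s,\omega)\,ds$ is generically nonzero even after cutting off, while a cutoff of the output in $x$ destroys the Plancherel identity you are relying on (it convolves the Fourier transform). What actually closes the estimate is a low/high frequency splitting: for $|\xi|\ge1$ one has $(1+\sigma^2)^{k+1}\sigma^{-2}\le2(1+\sigma^2)^{k}$ and your computation applies verbatim, while for $|\xi|\le1$ the contribution to $R^tD_s^{\hnm3}g$ is a smooth function all of whose derivatives are bounded on $B(\rho)$, since $\int_{|\xi|\le1}|\xi|^{|\alpha|-\hnp1}|\hat g(|\xi|,\xi/|\xi|)|\,d\xi$ converges (the polar Jacobian yields the exponent $|\alpha|+\hnm3\ge0$ for $n\ge3$) and is controlled by a local $L^2$ norm of $g$ via Cauchy--Schwarz. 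With that splitting inserted your argument is complete and essentially equivalent to the paper's. A minor side remark: for odd $n\ge3$ the operator $R^tD_s^{\hnm3}$ involves only nonnegative integer powers of $D_s$, so there is no ambiguity of primitive to kill; your appeal to $D_s^{\hnm1}$ annihilating constants is unnecessary, and for $n=3$, where $D_s^{\hnm3}=\Id$, it is not literally what happens.
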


\begin{proof} Recall that $V_{\LP}=c^2_n D_s^{\hnm1}R\cdot V\cdot R^t
D_s^{\hnm3}.$ From \eqref{12.15},
\begin{equation}
R^t D_s^{\hnm3}: H^k(\bbR\times\bbS^{n-1})\longrightarrow H^{k+1}(B(\rho)).
\label{12.19}\end{equation}
Then, from Lemma~\ref{12.4}, multiplication by $V$ maps into the space
$\dH{k+1}(B(\rho))$ and from Lemma~\ref{12.7},
$D_s^{\hnm1}R$ maps into $\dH{k+1}([-\rho,\rho]\times\bbS^{n-1}).$
\end{proof}

\section{Generalized backscattering transform}

We shall apply these regularity estimates to show that a `modified
backscattering transform,' in which `excess' information has been
discarded, extends by continuity to $\dH{\hnp1}(B(\rho)).$

Let $\pi_{S,\rho}$ be the orthogonal projection, in
$H^2([-2\rho,2\rho]\times\bbS^{n-1}),$ onto the closure of the range of
$D_s^{\hnm3}R_n$ applied to
$(\Id-S)^*\dH{\hnp1}(B(\rho))=\dH{\hnp1}((\Id-S)B(\rho))$ using
Lemma~\ref{12.7}; let $P_{S,\rho}$ be the range of $\pi_{S,\rho}.$ For
$V\in\CIc(\bbR^n)$ we know that the scattering kernel $\kappa_V,$ has
support in $\{s\ge-2\rho \}.$ We will `cut off the tail' where $s>2\rho$
and project the rest using $\pi_{S,\rho}.$ Thus, consider the combined
restriction, differentiation and projection map
\begin{multline}
\chi_\rho:\CI(\bbR\times\bbS^{n-1})\overset{D_s^{\hnm3}}\longrightarrow
\CI([-2\rho,2\rho ]\times\bbS^{n-1})\\
\overset{\pi_{S,\rho}}\longrightarrow
\overline{D_s^{\hnm3}R_n(\dH{\hnp1}((\Id-S)B(\rho))}
\subset\dH2([-2\rho,2\rho ]\times\bbS^{n-1}).
\label{12.20}\end{multline}

Now, for $V\in\CIc(\bbR^n)$ we know that
\begin{equation}
\singsupp \kappa_V\subseteq\{s=0,\theta=\omega \}.
\label{12.21}\end{equation}
Thus the generalized backscattering kernel,
$\kappa_V(s,S\theta,\theta)\in\CI(\bbR\times\bbS^{n-1}).$ We can therefore
apply \eqref{12.20} to define the {\it modified (and generalized)
  backscattering transform}
\begin{equation}
\beta_S:\dCI(B(\rho))\ni V\longmapsto
\chi_\rho[\kappa_V(s,S\theta,\theta)]\in P_{S,\rho}\subset
\dH2([-2\rho,2\rho]\times\bbS^{n-1}).
\label{12.22}\end{equation}

\begin{theo}[Main Result]\label{12.23} For any orthogonal transformation
  $S,$ such that $\Id-S$ is invertible, the modified backscattering
  transform \eqref{12.22} extends, by continuity, to
\begin{equation}
\beta_S:\dH{\hnp1}(B(\rho))\longrightarrow P_{S,\rho}\subset
\dH2([-2\rho,2\rho]\times\bbS^{n-1})
\label{12.24}\end{equation}
which is entire analytic. More precisely, it can be written
\begin{equation}
\beta_S(V)=\sum\limits_{j=1}^\infty\beta^j_S(V_,\dots V)
\label{12.25}\end{equation}
where
\begin{equation}
\beta_S^1:\dH{\hnp1}(B(\rho))\longrightarrow P_{S,\rho}\subset
\dH2([-2\rho,2\rho ]\times\bbS^{n-1})
\label{12.26}\end{equation}
is a linear isomorphism and for each $j\ge 2$
\begin{equation}
\beta_S^j:[\dH{\hnp1}(B(\rho))]^j\longrightarrow P_{S,\rho}\cap\dH{\fha}
([-2\rho ,2\rho ]\times\bbS^{n-1})
\label{12.27}\end{equation}
is symmetric and satisfies, for each $0\le\epsilon\le\ha,$
\begin{equation}\|\beta_S^j(V,\dots,V)\|_{\fha-\epsilon}\le
\frac{C^{j+1}\|V\|^{j}}{(j!)^{2\epsilon}}.
\label{12.28}\end{equation}
\end{theo}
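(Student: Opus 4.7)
Expand the scattering kernel as a convergent power series in $V$ by iterating Duhamel's formula for the transport equation \eqref{BackScat.12}. Writing $\alpha=\sum_{j\ge0}\alpha_j$ with $\alpha_j$ homogeneous of degree $j$ in $V$ and $\alpha_0$ the free incoming wave \eqref{BackScat.21}, the equation $(D_t+D_s+V_{\LP})\alpha=0$ together with the initial condition in \eqref{BackScat.19} yields
\begin{equation*}
\alpha_{j+1}(t,s,\omega;\theta)=-\int_{-\infty}^{t}(V_{\LP}\alpha_j)(t',s-(t-t'),\omega;\theta)\,dt'.
\end{equation*}
Restricting to $s>\rho$ gives $\kappa_V=\sum_{j\ge1}\kappa_V^{(j)}$; composing with the specialization $\omega=S\theta$ and $\chi_\rho$ and polarizing produces the symmetric multilinear operators $\beta_S^j$ of \eqref{12.25}. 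The absolute convergence in the relevant Sobolev norms will follow from the bounds below.

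\textbf{Linear term.} The first Born term is $\alpha_1=-\int V_{\LP}[\delta(s'-t')\delta_\theta(\omega')]\,dt'$; unpacking \eqref{BackScat.13} and setting $\omega=S\theta$ shows that, up to a universal constant, $D_s^{\hnm3}\kappa_V^{(1)}(s,S\theta,\theta)$ is $D_s^{\hnm3}R_n$ applied to the image of $V$ under the change of variables $x\mapsto(\Id-S)x$. Since $\Id-S$ is invertible, this change of variables is a linear isomorphism $\dH{\hnp1}(B(\rho))\to\dH{\hnp1}((\Id-S)B(\rho))$, and by the very definition of $P_{S,\rho}$ as the closure of the $D_s^{\hnm3}R_n$-image of the latter space, together with the isometry property \eqref{BackScat.22} of $R_n$, $\beta_S^1$ is a linear isomorphism onto $P_{S,\rho}$, proving \eqref{12.26}.

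\textbf{Higher terms, bounds and main obstacle.} For $j\ge2$, iterate Lemma~\ref{12.17}: each $V_{\LP}$ factor gains one derivative provided the regularity index stays in $[-\hnp3,\hnm1]$. The $j$-fold iterated Duhamel integral is over the simplex $\{t'_1\le\cdots\le t'_j\}$, each $t'_i$ confined to $[-\rho,\rho]$ by \eqref{BackScat.14}, contributing volume $(2\rho)^j/j!$; this produces the decay $\|V\|^j/j!$ when the target is $H^2$ (the endpoint $\epsilon=\ha$). The straight operator-norm estimate on $V_{\LP}$, combined with the extra half-derivative gain from the outermost $D_s^{\hnm1}R\cdot V\cdot R^t D_s^{\hnm3}$ under the hypothesis $V\in\dH{\hnp1}$, gives the non-factorial bound $\|V\|^j$ into the harder target $H^{\fha}$ (the other endpoint $\epsilon=0$); in particular this verifies \eqref{12.27}. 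Complex interpolation between the two endpoint estimates yields \eqref{12.28} with the exponent $(j!)^{-2\epsilon}$. The main obstacle is identifying $\beta_S^1$ as a genuine isomorphism onto $P_{S,\rho}$: this is precisely where the hypothesis that $\Id-S$ is invertible and the careful choice of the projection $\pi_{S,\rho}$ enter critically, discarding the ``excess information'' in $\kappa_V$ that cannot be recovered from $V\in\dH{\hnp1}(B(\rho))$.
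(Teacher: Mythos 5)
Your overall skeleton --- Neumann/Duhamel expansion of $\alpha$, explicit identification of the linear term as $D_s^{\hnm3}R_n$ applied to $V((\Id-S)^{-1}\cdot)$, two endpoint estimates plus interpolation --- coincides with the paper's, and your treatment of $\beta_S^1$ is essentially the paper's argument. The genuine gap is in the higher-order terms, precisely the step you dispose of in one sentence. Iterating Lemma~\ref{12.17} gains one derivative per factor of $V_{\LP}$ only while the index stays in $[-\hnp3,\hnm1]$, so starting from $\delta(s-t)\delta_\theta(\omega)\in H^{-\hnp1}_{\loc}$ the regularity of $\alpha_j$ saturates at $H^{\hnp1}$ in the full variables (this is \eqref{12.41}). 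To form $\kappa_j(s,S\theta,\theta)$ you must then restrict to the codimension $n-1$ submanifold $\omega=S\theta$, which by the trace theorem costs $\hnm1$ derivatives and leaves only $H^1$ --- exactly \eqref{12.46}, which the paper states explicitly is ``a good deal weaker than we need.'' No ``extra half-derivative from the outermost factor'' can bridge the gap from $H^1$ to $H^{\fha}$: the obstruction is the trace on the backscattering submanifold, not the operator norm of $V_{\LP}$.

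What the paper actually does, and what your plan omits entirely, is the explicit Fourier--Laplace representation of $\widehat{\kappa_j}(\lambda,S\theta,\theta)$ as an iterated convolution \eqref{12.62} with $j-1$ free-resolvent factors $(|\eta^{(\ell)}|^2-\lambda^2)^{-1}$, after Lemma~\ref{12.54} identifies $Q_\lambda=R_n^tD_s^{-1}(D_s+\lambda)^{-1}R_n$ with $(\Lap-\lambda^2)^{-1}$. Taking $\Im\lambda=-1$, the pointwise bounds \eqref{12.63}--\eqref{12.64} trade each resolvent factor for a half-derivative of smoothing distributed onto the adjacent factors of $\hat V$, and the Sobolev algebra property (Lemma~\ref{12.4}) then shows the restriction to $\omega=S\theta$ is far better than a generic trace, giving $\|\kappa_j(\cdot,S\theta,\theta)\|_{\fha}\le C^{j+1}\|V\|^j$; the invertibility of $\Id-S$ is used again at this stage (to absorb the $\xi=\lambda\theta$ dependence), not only for the linear term as you suggest. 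Your $\epsilon=\ha$ endpoint has the same defect: the factor $1/j!$ does come from the Volterra estimate \eqref{12.34} in the bulk variables, but converting it into an $H^2$ bound on the restricted kernel still requires the regularity supplied by the resolvent computation. The general case of \eqref{12.28} then follows from the interpolation inequality \eqref{12.69}, and strong ($H^2$-valued) analyticity of the sum is deduced via Pettis' theorem from the weak entire bound \eqref{12.47} --- another point your plan passes over.
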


As we shall describe below, this implies that $\beta_S$ is almost
everywhere a local isomorphism. It is not known, at least to the authors,
whether $\beta_S$ is a global isomorphism (for any admissible $S,$ in
particular $S=-\Id).$ Nor indeed is it known whether the differential of
$\beta_S,$ at $V\in\dH{\hnp1}(B(\rho))$ is always invertible --
although it {\it is} Fredholm. Nor is there a conjectural characterization
of the singular points.

The Taylor expansion \eqref{12.25} for the modified backscattering transform
is closely related to the Born approximation. This in turn is just the
Neumann (or perhaps better to say Volterra) series for the solution of the
(Radon-transformed) wave equation.

Formally at least, the solution to \eqref{BackScat.19} can be expanded as a series
\begin{equation}
\begin{gathered}
\alpha=\delta(s-t)\delta_\theta(\omega)+\sum\limits_{j=1}^\infty(-1)^j\alpha_j,\
\alpha_j=[(D_t+D_s)^{-1}V_{\LP}]^j\alpha_0,\ j\ge1\\
\alpha_0=\delta(s-t)\delta_\theta(\omega).
\end{gathered}
\label{12.30}\end{equation}
Here $(D_t+D_s)^{-1}$ is the inverse of the free forcing problem
\begin{equation}
(D_t+D_s)u=f,\ f=0\text{ in }s<-\rho,\ u=0\text{ in }s<-\rho
\Longrightarrow u=(D_t+D_s)^{-1}f.
\label{12.31}\end{equation}

We proceed to show that, for any $V\in\dH{\hnp1}(B(\rho)),$ the series
\eqref{12.30} converges.

\begin{prop}\label{12.32} For any
$V\in\dH{\hnp1}(B(\rho)),$ $T<\infty$ and $k\in\bbZ,$ with
$-\hnp3\le k\le\hnp1,$ $(D_t+D_s)^{-1} V_{\LP}$ is bounded as an operator on
\begin{equation}
\dH k_{T,\rho}=\left\{ f\in\dH k([-\infty,T]_t\times [-\rho,\rho
]_s\times\bbS^{n-1});f=0 \text{ in } t<-\rho\right\}
\label{12.33}\end{equation}
and for some $C=C(T)$
\begin{equation}
\|[(D_t+D_s)^{-1}
V_{\LP}]^j\|_{H^{k}}\le\frac{C^{j+1}\|V\|^{j}}{j!},
\label{12.34}\end{equation}
where $\|V\|$ is the norm in $\dH{\hnp1}(B(\rho)).$
\end{prop}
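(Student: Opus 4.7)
The plan is to combine the spatial smoothing of $V_{\LP}$ from Lemma~\ref{12.17} with the Volterra structure of $G:=(D_t+D_s)^{-1}$ along the null characteristics $\{s-t=\text{const}\}$, the latter being the mechanism that produces the $1/j!$ factor in \eqref{12.34}.

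First I would write $G$ explicitly: for $h$ supported in $t\ge-\rho$,
\begin{equation*}
Gh(t,s,\omega)=i\int_{-\rho}^{t}h\bigl(\tau,\,s-(t-\tau),\,\omega\bigr)\,d\tau.
\end{equation*}
Since $G$ commutes with $D_s$ and with tangential differentiation on $\bbS^{n-1}$, and $D_tG=\Id-D_sG$ by construction, Minkowski's inequality gives $\|Gh\|_{H^k}\le C(T+\rho)\|h\|_{H^k}$ on functions supported in $-\rho\le t\le T$, for $k$ in the stated range. Then Lemma~\ref{12.17} gives $V_{\LP}\colon H^k\to H^{k+1}$ with operator norm $\le C\|V\|$ in the spatial variables $(s,\omega)$; since $V_{\LP}$ acts trivially in $t$, this lifts to the $(t,s,\omega)$-setting. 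Composing yields $\|GV_{\LP}\|_{H^k\to H^k}\le C_1(T)\|V\|$, the $j=1$ case of \eqref{12.34}.

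For the higher iterates the key observation is that $(GV_{\LP})^j$, viewed as an integral operator in $(t,s,\omega;t',s',\omega')$, has triangular $t$-dependence: because $V_{\LP}$ acts trivially in $t$, the $(t,t')$ marginal of its kernel is the $(j-1)$-fold self-convolution of the Heaviside function $\chi_{\{t'\le t\}}$, namely $\chi_{\{-\rho\le t'\le t\}}(t-t')^{j-1}/(j-1)!$. Applying Schur's test in the $t$-variable, together with the uniform spatial operator bound $\le C\|V\|$ per factor of $V_{\LP}$ in the remaining variables, one obtains
\begin{equation*}
\|(GV_{\LP})^j\|_{H^k\to H^k}\le \frac{(C_2\|V\|)^j(T+\rho)^j}{j!},
\end{equation*}
which is \eqref{12.34}.

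The main technical obstacle is carrying these estimates through at the full $H^k$ level rather than just at $L^2$. One must commute $k$ derivatives through both $G$ and $V_{\LP}$ while preserving the triangular Volterra structure in $t$: the spatial $D_s$ and tangential $\omega$-derivatives pass through $G$ and are absorbed by $V_{\LP}$ with a gain of one order (Lemma~\ref{12.17}), while each $D_t$-derivative is converted to $-D_s$ modulo a forcing term using the identity $(D_t+D_s)(GV_{\LP})u=V_{\LP}u$, applied inductively. A secondary bookkeeping point is that although an intermediate $G$-output may have $s$-support extending to $|s|\le T+\rho$, the next $V_{\LP}$ immediately restricts support back to $|s|\le\rho$, so the above norms are taken on a fixed (though large) finite rectangle.
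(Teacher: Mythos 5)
Your argument is correct and is essentially the paper's own proof: your explicit integral formula for $(D_t+D_s)^{-1}$ along characteristics is exactly the paper's change of coordinates $t'=t$, $s'=s-t$, after which $V_{\LP}$ acts with $t'$ as a parameter and the $1/j!$ comes from the standard Volterra iteration (your Schur-test computation with the triangular kernel $(t-t')^{j-1}/(j-1)!$ is the same simplex-volume count, and the uniform spatial bound per factor is Lemma~\ref{12.17} in both cases). The only presentational difference is in how the $t$-derivatives in the full $H^{k}$ norm are handled: the paper applies the operator $|k|+1$ extra times to land in $C^0([-\rho,T];H^k(\bbS^{n-1}\times\bbR_{s'}))$ and then iterates there, whereas you commute them away via $(D_t+D_s)G=\Id$; both routes close the argument.
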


\begin{proof} Since $t$ is a parameter in the action of $V_{\LP}$ and
$(D_t+D_s)^{-1}$ is bounded on any Sobolev space the boundedness is clear
from Lemma~\ref{12.17}. Only the Volterra-type estimate
\eqref{12.34} needs to be shown. To carry out this estimation it is
convenient to introduce $D_t+D_s$ and $D_s$ as coordinate vector fields,
i.e.\ change coordinates to
\begin{equation}
t'=t,\ s'=s-t.
\label{12.35}\end{equation}
The operators are transformed as follows
\begin{equation}
D_t+ D_s\longmapsto D_{t'},\ V_{\LP} \longmapsto V'_{\LP}(t', s',
D_{s'})
\label{12.36}\end{equation}
where $V'_{\LP}$ is still a non-local operator in $s',$ but now depending on
$t'$ as a parameter, i.e.
\begin{equation}
V'_{\LP}u(t',s') \text{ depends only on } u(t',\cdot).
\label{12.37}\end{equation}
The iterated operator is therefore
\begin{equation}
\left( D_{t'}^{-1} V'_{\LP}\right)^j.
\label{12.38}\end{equation}
Applying this $|k|+1$ times to $H^k$ gives a bounded map into the space
$$
C^0([-\rho,T];H^k(\bbS^{n-1}\times\bbR_{s'})).
$$
Then, integration in $t'$ and
continuity of $V'_{\LP}$ shows that
\begin{equation}
\|(D^{-1}_{t'} V'_{\LP})^{j+|k|+1}
u\|_{H^{k}(\bbS^{n-1}\times\bbR_{s'})}(t')\le\frac{C(t'+\rho)^{j}}{j!}.
\label{12.39}\end{equation}
This gives \eqref{12.34}.
\end{proof}

Of course from Lemma~\ref{12.17} we know that, if $-\hnp3\le k\le\hnm1,$
\begin{equation}
(D_t+D_s)^{-1}V_{\LP}:\dH k_{T,\rho}\longrightarrow\dH{k+1}_{T,\rho}
\label{12.40}\end{equation}
Since
\begin{equation}
\delta(t-s)\delta_{\theta}(\omega)\in
H_{\loc}^{-\hnp1}(\bbR^2\times\bbS^{n-1}\times\bbS^{n-1})
\end{equation}
it follows that
\begin{equation}
\alpha_j\in H_{\loc}^{-\hnp1+\min(j,n+1)}(\bbR^2\times\bbS^{n-1}
\times\bbS^{n-1}).
\label{12.41}\end{equation}

Consider the successive terms, $\alpha_j,$ in \eqref{12.30}. Since
$V_{\LP}$ always restricts supports to $[-\rho,\rho]$ in $s,$
\begin{equation}
\supp(\alpha_j)\subseteq\{t\ge-\rho\}\cap\{s\ge-\rho\}\cap\{t-s\ge-2\rho\}
\cap\{t-s\le 2j\rho\}.
\label{12.42}\end{equation}


To get the expansion \eqref{12.25} we need to use \eqref{12.30} and then
project each term with $\chi_\rho,$ after restricting to $s=\rho,$
$\omega=S\theta$ (and shifting in $t)$ to get the scattering kernel. Thus if
\begin{equation}
\kappa_j(s,\omega,\theta)=\alpha_j(s-\rho,\rho,\theta,\omega)
\label{12.43}\end{equation}
then
\begin{equation}
\beta_S^j(V)=\chi_\rho[\kappa_j(s,S\theta,\theta)].
\label{12.44}\end{equation}

Since, as a function of $t-s,s,\omega$ and $\theta,$ $\alpha_j$ is
independent of $s$ in $s>-\rho$ it follows from \eqref{12.41} that
\begin{equation}
\kappa_j\in
H^{-\hnp1+\min(j,n+1)}([-2\rho,T)\times\bbS^{n-1}\times\bbS^{n-1})\text{
for any T.}
\label{12.45}\end{equation}
Restricting to $\omega=S\theta,$ a submanifold of codimension $n-1$ shows
that
\begin{equation}
\kappa_j(s,S\theta,\theta)\in H^1([-2\rho,T)\times\bbS^{n-1})\text{ if }j\ge
n+1.
\label{12.46}\end{equation}
Moreover, to get \eqref{12.46} we only use the regularity property
\eqref{12.40} for the first $n+1$ factors in \eqref{12.38}. Thus we
conclude that the map
\begin{equation}
\dH{\hnp1}(B(\rho))\longrightarrow \sum\limits_{j\ge n+1}
\kappa_j(s,S\theta,\theta)\in H^1([-2\rho,T)\times\bbS^{n-1})
\text{ is entire}
\label{12.47}\end{equation}
for each $\rho.$ This is a good deal weaker than we need to prove
the Theorem. Obviously we need to examine the first $n+1$
terms in the Taylor series of $\beta$ at $V=0$ to show that this polynomial
in $V$ is defined and in any case we have to show that the whole map $\beta_S$
takes values in $H^2$ rather than $H^1.$ Nevertheless we shall use
\eqref{12.47} because it allows us to prove that $\beta$ is entire, with
values in the good space (essentially because of Pettit's theorem).

\section{Proof of the main result}

First we examine 
\begin{equation}
\kappa_S^1(s,S\theta,\theta)=\alpha_1(s-\rho,\rho,S\theta,\theta)
\label{12.48}.\end{equation}
This already has support in $[-2\rho, 2\rho ].$ We wish to show that this,
the linear, term is as claimed in \eqref{12.26}. We proceed to compute
$\kappa_1$ explicitly. It is convenient to take the Fourier transform in $s:$
\begin{equation}
\widehat{k_1}(\lambda,\omega,\theta)=\int\limits_{-\infty }^\infty
e^{-i\lambda t}\kappa_1(t,\omega,\theta)dt=
\widehat{\alpha_1}(\lambda,\rho,\omega,\theta)e^{i\lambda\rho }.\end{equation}
>From the definition of $\alpha_1,$ this gives
\begin{equation}
\begin{aligned}
\widehat{\kappa_1}(\lambda,\omega,\theta)=e^{i\lambda\rho }
\int\limits_{-\infty }^\infty \int e^{-i\lambda(\rho-s')}\left[ V_{\LP}
e^{-i\lambda s }\delta_ \theta(\omega)\right] ds'.\\
=c^2_n\int e^{i\lambda s}D_s^{\hnm1}\int\limits_{x\cdot\omega=s} V(x)
\lambda^{\hnm3} e^{-i\lambda x\cdot\theta }dxds.
\end{aligned}
\end{equation}
Integrating by parts we get
\begin{equation}
\widehat{\kappa_1}(\lambda,\omega,\theta)=c_n^2\lambda^{n-2}\int
e^{i\lambda x\cdot(\omega-\theta)} V(x) dx.
\label{12.49}\end{equation}
Setting $\omega=S\theta$ we find
\begin{equation}
\widehat{\kappa_S^1}(\lambda,S\theta,\theta)=
c_n^2\lambda^{n-2}\hat{V}(\lambda(\Id-S)\theta).
\label{12.50}\end{equation}
Thus $\widehat{\beta_S^1(V)}$ is the ($n$-dimensional) Fourier transform of
$2^{-n}V((\Id-S)^{-1}x)=\tilde V_S.$ Hence,
\begin{equation}
\beta_S^1=c_n D_s^{\hnm3} R_n\tilde V_S
\label{12.51}\end{equation}
shows that $\beta_S^1$ maps into $\dH2([-2\rho,2\rho ]\times\bbS^{n-1}).$ It
is obviously an isomorphism onto $D_s^{\hnm3}R_n\dH{\hnp1}((\Id-S)B(\rho))$
(which is closed) as claimed.

Next we proceed to find a formula generalizing \eqref{12.49} to the higher
derivatives at zero. From \eqref{12.42} we see that, for $s$ bounded above,
the support of each $\alpha_j$ is compact in $t.$ After taking the Fourier
transform in $t,$ the iterative definition \eqref{12.30} becomes
\begin{equation}
\widehat{\alpha_j}(\lambda,s,\omega,\theta)=
(D_s+\lambda)^{-1}R_n[V\cdot Q_
\lambda]^{j-1}VR^tD_s^{(n-3)/2}e^{-is\lambda}\delta_\theta(\omega),
\label{12.52}\end{equation}
where
\begin{equation}
Q_\lambda=R_n^tD_s^{-1}(D_s+\lambda)^{-1}R_n.
\label{12.53}\end{equation}
Here $D_s^{-1},$ and $(D_s+\lambda)^{-1}$ mean integration from $s=-\infty,$
i.e.\ the inverse preserving vanishing to the left.

\begin{lem}\label{12.54} Acting from $\CIc(\bbR^n)$ to $\CI(\bbR^n),$
$Q_\lambda=(\Lap-\lambda^2)^{-1}$ is the analytic extension of the `free
resolvent' defined as a bounded operator on $L^2$ for $\Im\lambda<0.$
\end{lem}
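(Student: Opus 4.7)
The plan is to establish the operator identity $(\Lap - \lambda^2) Q_\lambda = \Id$ on $\CIc(\bbR^n)$. Granted this, the lemma follows: when $\Im\lambda<0$ the bounded $L^2$-inverse of $\Lap-\lambda^2$ is unique, so $Q_\lambda$ agrees with $(\Lap-\lambda^2)^{-1}$ on that half-plane, and since $Q_\lambda$ depends analytically on $\lambda$ through $(D_s+\lambda)^{-1}$ it provides the analytic extension of the free resolvent.

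To reduce the identity, I would apply the transpose $\Lap R_n^t = R_n^t D_s^2$ of the intertwining \eqref{BackScat.4}, the factorization $D_s^2-\lambda^2 = (D_s-\lambda)(D_s+\lambda)$, and commutativity of the one-dimensional factors:
\begin{equation*}
(\Lap-\lambda^2)Q_\lambda = R_n^t(D_s^2-\lambda^2) D_s^{-1}(D_s+\lambda)^{-1} R_n = R_n^t(D_s-\lambda)D_s^{-1} R_n = R_n^t R_n - \lambda\,R_n^t D_s^{-1} R_n.
\end{equation*}
The first term equals $\Id$ by the inversion formula \eqref{BackScat.5}. Everything therefore comes down to showing $R_n^t D_s^{-1} R_n = 0$ on $\CIc(\bbR^n)$, which is the main obstacle.

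I would handle this vanishing by a parity argument. Let $\iota$ denote the involution $g(s,\omega)\mapsto g(-s,-\omega)$ on $L^2(\bbR\times\bbS^{n-1})$. By \eqref{BackScat.22} the closed range of $R_n$ is exactly the $(-1)^{\hnm1}$-eigenspace of $\iota$; since the two eigenspaces are orthogonal, $R_n^t$ annihilates the $(-1)^{\hnp1}$-eigenspace. Now $D_s$ anticommutes with $\iota$ because $\iota$ reverses the sign of $s$, hence so does $D_s^{-1}$, which therefore carries the $(-1)^{\hnm1}$-eigenspace into the $(-1)^{\hnp1}$-eigenspace. In particular $D_s^{-1} R_n f$ lies in the kernel of $R_n^t$. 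The operator $D_s^{-1}$ is unambiguously defined on this input because $\widehat{R_n f}(0,\omega) = 0^{\hnm1}\hat f(0) = 0$ for $n\ge3$, so $R_n f$ has vanishing mean in $s$ and integration from $-\infty$ returns a compactly supported antiderivative. Combining with $R_n^t R_n = \Id$ now gives $(\Lap-\lambda^2)Q_\lambda = \Id$, and the argument of the first paragraph concludes the proof.
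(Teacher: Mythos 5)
Your proof is correct, but it takes a genuinely different route from the paper's. The paper never verifies $(\Lap-\lambda^2)Q_\lambda=\Id$ by hand: it uses the full two-component structure of the Lax--Phillips transform, namely that $\LP$ conjugates the generator of the wave group to $D_s$ (equation \eqref{12.55}), so that for $\Im\lambda<0$ the resolvents of the two generators are likewise conjugate, and the formula $c_n^2R^tD_s^{\hnm3}(D_s+\lambda)^{-1}D_s^{\hnm1}=(\Lap-\lambda^2)^{-1}$ drops out by reading off one entry of the $2\times2$ matrix resolvent --- no residual term ever appears, and invertibility for $\Im\lambda<0$ comes for free from self-adjointness of the generator. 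You instead work only with the scalar intertwining $\Lap R^t=R^tD_s^2$ and the inversion formula, which forces you to confront and kill the extra term $\lambda\,R_n^tD_s^{-1}R_n$; your parity argument for its vanishing (the range of $R_n$ is one eigenspace of $g(s,\omega)\mapsto g(-s,-\omega)$ by \eqref{BackScat.22}, $D_s^{-1}$ flips the eigenvalue because $R_nf$ has zero mean in $s$, and the transpose of the isometry annihilates the complementary eigenspace) is sound and is exactly where the range characterization earns its keep. Two small points to tighten: identifying $Q_\lambda$ with the bounded resolvent for $\Im\lambda<0$ requires knowing $Q_\lambda f\in L^2$, which you assert via uniqueness but do not check (it does hold, since $(D_s+\lambda)^{-1}$ gains exponential decay as $s\to+\infty$ when $\Im\lambda<0$); and ``$R_n^t$ annihilates the complementary eigenspace'' needs the formal transpose to be matched with the Hilbert-space adjoint, a normalization the paper itself glosses over. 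On balance your argument is more elementary and self-contained, while the paper's is shorter because the matrix resolvent computation absorbs all the bookkeeping automatically.
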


\begin{proof} This formula can be deduced from the modified Radon
transform of Lax and Phillips. We know that this intertwines the wave group
$U_0(t)$ with the translation group, so conjugates the infinitesimal
generator of one to that of the other
\begin{equation}
c_n(D_s^{\hnm1}R,D_s^{\hnp1}R)\begin{pmatrix} 0 &-1\\ \Lap
&0\end{pmatrix}
=D_s(D_s^{\hnm1}R,D_s^{\hnp1}R).
\label{12.55}\end{equation}
For $\Im\lambda<0,$ so in the resolvent set, it follows that
\begin{equation}
c_n^2R^tD_s^{\hnm3}(D_s+\lambda)^{-1}D_s^{\hnm1}=(\Lap-
\lambda^2)^{-1}.
\label{12.56}\end{equation}
This proves the lemma.
\end{proof}

Inserting the integral expression for $(D_s+\lambda)^{-1}$ into
\eqref{12.52} gives
\begin{multline}
\widehat{\alpha_j}(\lambda,s,\omega,\theta)=
c_n^2\int\limits_{-\infty}^s
e^{-i\lambda(s-s')}D_{s'}^{\hnm1}\int\limits_{x\cdot\omega=s'}V\cdot
Q_\lambda\cdot V\cdots\\
Q_\lambda\cdot[V(\bullet)(-\lambda)^{\hnm3}
e^{-i\lambda\bullet\cdot\theta}]dH_xds'.
\label{12.57}\end{multline}
>From \eqref{12.43}, by setting $s=\rho$ and integrating by parts we find
\begin{equation}
\widehat{\kappa_j}(\lambda,\omega,\theta)=
c_n^2(-1)^{\hnm3}\lambda^{n-2}\int\limits_{\bbR^n}e^{i\lambda\omega\cdot x}
V(x)[Q_\lambda\cdots Q_\lambda\cdot V(\bullet)
e^{-i\lambda\theta\cdot\bullet}](x)dx.
\label{12.58}\end{equation}
Restricting to backscattering, $\omega=S\theta,$ this gives
$\widehat{\kappa_S^j}$ in a form similar to \eqref{12.50}. Since $\kappa_j$
has support in $[-2\rho,2j\rho]$ its regularity can be deduced from its
Fourier-Laplace transform with $\Im\lambda=-1.$ Thus we need to examine the
growth in $\lambda$ of
\begin{multline}
\widehat{\kappa_j}(\lambda,S\theta,\theta)=\\
c_n^2\lambda^{n-2}
\int\limits_{\bbR^{jn}}e^{i\lambda \theta\cdot(S^tx^{(1)}-x^{(j)})}
V(x^{(1)})Q_\lambda(x^{(1)}-x^{(2)})V(x^{(2)})\dots\\
\dots Q_\lambda(x^{(j-1)}-x^{(j)})V(x^{(j)})(x)dx^{(1)}\dots dx^{(j)}
\label{12.59}\end{multline}
where there are $j-1$ factors of the free resolvent, $Q_\lambda,$ and $j$
factors of $V.$ As a convolution operator $Q_\lambda$ has kernel
\begin{equation}
Q_\lambda(y)=(2\pi)^{-n}\int
e^{iy\cdot\eta}(|\eta|^2-\lambda^2)^{-1}d\eta.
\label{12.60}\end{equation}
Inserting this into \eqref{12.59} gives
\begin{multline}
\widehat{\kappa_j}(\lambda,S\theta,\theta)=\\
c_n^2\int V(x^{(1)})V(x^{(2)})\dots V(x^{(j)})
\prod\limits_{\ell=1}^{j-1}(|\eta^{(\ell)}|^2-\lambda^2)^{-1}\\
\times\exp[i(S^tx^{(1)}-x^{(j)})\cdot\xi
+i(x^{(1)}-x^{(2)})\cdot\eta^{(1)}+\dots+i(x^{(j-1)}-x^{(j)})\cdot
\eta^{(j-1)}]\\
dx^{(1)}\dots dx^{(j-1)}d\eta^{(1)}\dots d\eta^{(j-1)}
\label{12.61}\end{multline}
where $\xi=\lambda\theta.$

Carrying out the $x$-integrals in \eqref{12.61} gives
\begin{equation}
\begin{gathered}
\widehat{\kappa_j}(\lambda,S\theta,\theta)\\
{\begin{aligned}= c_n^2\lambda^{n-2}&\int
\hat{V}(-S^t\xi-\eta^{(1)})\hat{V}(\eta^{(1)}-\eta^{(2)})\dots
\hat{V}(\eta^{(j-2)}-\eta^{(j-1)})\hat{V}(\eta^{(j-1)}+\xi)\\
&\prod\limits_{\ell=1}^{j-1}(|\eta^{(\ell)}|^2-\lambda^2)^{-1}
d\eta^{(1)}\dots d\eta^{(j-1)}.\end{aligned}}\end{gathered}
\label{12.62}\end{equation}
Apart from the factors arising from the resolvent this is an iterated
convolution. Since $\Im\lambda=-1,$ the resolvent factors are non-singular.
Using the obvious estimates
\begin{equation}
|(|\eta|^2-\lambda^2)^{-1}|\le c(1+|\eta|+|\lambda|)^{-1}.
\label{12.63}\end{equation}
and
\begin{equation}
(1+|\eta'|+|\lambda|)^{-1}(1+|\eta|+|\lambda|)^{-1}\le(1+|\eta-\eta'|)^{-1}
\label{12.64}\end{equation}
the right side of \eqref{12.62} can be estimated to give
\begin{equation}
\begin{gathered}
|\widehat{\kappa_j}(\lambda,S\theta,\theta)|\le
C^{j+1}|\lambda|^{n-2}\times\\
{\begin{aligned}\int\hat{\Phi}&(-S^t\xi-\eta^{(1)})
\hat{\Phi}(\eta^{(1)}-\eta^{(2)})\\
&\dots\hat{\Phi}(\eta^{(j-2)}-\eta^{(j-1)})\hat{\Phi}(\eta^{(j-1)}+\xi)
d\eta^{(1)}\dots d\eta^{(j-1)},\end{aligned}}
\end{gathered}
\label{12.65}\end{equation}
where
\begin{equation}\hat{\Phi}(\eta)=|\hat{V}(\eta)|(1+|\eta|)^{-\ha}.
\label{12.66}\end{equation}
Thus
\begin{equation}
\|\Phi\|_{H^{(n+2)/2}}\le\|V\|_{H^{(n+1)/2}}.
\label{12.67}\end{equation}
First translating the variables of integration to $\eta^{(\ell)}+\xi$ we find
that the right side of \eqref{12.65} is the Fourier transform of a product of
functions, so using Lemma~\ref{12.4} repeatedly (and taking into
account the factor of $\lambda^{n-2}$ and the invertibility of $S^t-\Id)$
\begin{equation}
\|\kappa_j(s,S\theta,\theta)\|_{H^{\fha}([-2\rho,2\rho ]\times\bbS^{n-1}}\le
C^{1+j}\|V\|_{H^{(n+1)/2}}.
\label{12.68}\end{equation}

This gives the desired continuity \eqref{12.27} and estimates \eqref{12.28}
for $\epsilon=0.$ Moreover the estimates \eqref{12.49} give \eqref{12.28}
for $\epsilon=\ha$ and large (hence all) $j.$ The estimates for all
$\epsilon\in [0,\ha]$ then follow by interpolation between Sobolev spaces,
i.e.
\begin{equation}\|u\|_{\fha-\epsilon}\le C\|u\|_{2}^{2\epsilon}
\|u\|_{\fha}^{1-2\epsilon}\quad\forall\ \epsilon\in[0,\ha].
\label{12.69}\end{equation}
This completes the proof of Theorem~\ref{12.23}.

It may be that the estimates centered on \eqref{12.63} can be improved to
give the exponential type estimates \eqref{12.28} directly and with values
in $H^{\fha}.$ If the original regularity $(n+1)/2$ for $V$ is increased by
$p$ then the regularity of the derivatives $\beta_S^j$ in \eqref{12.28} can
also be increased by $p.$

Note that the map $\beta_S$ in \eqref{12.44} is defined by projection onto
the range of the linearization of $V\longmapsto \kappa_V(s,S\theta,\theta)$
at $V=0.$ The linearization has been shown to be an injective Fredholm map,
i.e.\ is an isomorphism onto its (closed) range, so its generalized inverse
is a bounded map
\begin{equation}
L_S:\dH2([-2\rho,2\rho]\times\bbS^{n-1})\longrightarrow
\dH2([-2\rho,2\rho]\times\bbS^{n-1})\longrightarrow \dH{\hnp1}(B(\rho)).
\label{BackScat.23}\end{equation}
The map in \eqref{BackScat.1} is then 
\begin{equation}
L_S\beta_S(V)=L_S(\kappa_V(s,S\theta ,\theta))\text{ on }\dH{\hnp1}(B(\rho))
\label{BackScat.24}\end{equation}
which is therefore an entire map with linearization the identity at $0$ and
derivative at all other points a compact perturbation of the identity.

\section{Fredholm property}

\begin{prop}\label{12.72} There is a closed subset
$G(\rho)\subset\dH{\hnp1}(B(\rho))$ which is of codimension at least two
(i.e.\ locally orthogonal projection from $G(\rho)$ onto some subspace of
codimension two is at most $p$-to-$1$ for some fixed $p\in\bbN)$ such that for
each $V'\in[\dH{\hnp1}(B(\rho))\setminus G(\rho)]$ there exists $\epsilon>0$
such that the map
\begin{equation}
\beta_S:\left\{V\in\dH{\hnp1}(B(\rho));\|V-V'\|<\epsilon\right\}
\longrightarrow\dH2([-2\rho,2\rho]\times\bbS^{n-1})
\label{12.73}\end{equation}
is an isomorphism onto its image.
\end{prop}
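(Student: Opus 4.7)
The strategy has three moves: (i) show that the differential $d\beta_S(V')$ is everywhere a compact perturbation of the isomorphism $\beta_S^1$, hence Fredholm of index zero; (ii) realize $G(\rho)$ as the zero set of a non-trivial entire function (a regularized Fredholm determinant) on the complexification of $\dH{\hnp1}(B(\rho))$; and (iii) extract the codimension-two projection bound from a Weierstrass preparation argument applied to this function. The local isomorphism assertion then falls out of the analytic inverse function theorem.

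For (i), differentiating the series \eqref{12.25} termwise (justified by \eqref{12.28}) gives
\begin{equation*}
d\beta_S(V')W=\beta_S^1(W)+\sum_{j\ge 2}j\,\beta_S^j(V',\dots,V',W).
\end{equation*}
Every $j\ge 2$ summand lands in $P_{S,\rho}\cap\dH{\fha}([-2\rho,2\rho]\times\bbS^{n-1})$ by \eqref{12.27}, and the inclusion $\dH{\fha}\hookrightarrow\dH{2}$ is compact on this compact base by Rellich. Hence $d\beta_S(V')=\beta_S^1+K(V')$ with $K(V')$ compact, and composition with $(\beta_S^1)^{-1}$ yields an entire family $T(V')=\Id+\tilde K(V')$ of compact perturbations of the identity on $\dH{\hnp1}(B(\rho))$, with $T(0)=\Id$. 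Because $\beta_S$ is entire, $\tilde K$ extends to a holomorphic family on the complexified Hilbert space, and the analytic Fredholm theorem gives the dichotomy that $G(\rho):=\{V':T(V')\text{ not invertible}\}$ is either all of $\dH{\hnp1}(B(\rho))$ or a proper closed analytic subvariety. The first alternative is excluded by $T(0)=\Id$; for any $V'\notin G(\rho)$ the analytic inverse function theorem then produces the $\epsilon>0$ making \eqref{12.73} a diffeomorphism onto its image.

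For the codimension-two statement I would quantify the smoothing $\dH{\fha}\hookrightarrow\dH{2}$ by singular-value decay to place $\tilde K(V')$ in a fixed Schatten ideal $\mathfrak{S}_p$ uniformly on bounded subsets of $V'$; the regularized determinant $\Phi(V'):=\det_p T(V')$ is then entire with $\Phi(0)=1$ and zero set exactly $G(\rho)$. Near any $V_0\in G(\rho)$, choose real directions $W_1,W_2\in\dH{\hnp1}(B(\rho))$ such that the holomorphic extension $(z_1,z_2)\mapsto\Phi_\bbC(V_0+z_1W_1+z_2W_2)$ is not identically zero on $\bbC^2$; such $W_1,W_2$ exist because $\Phi_\bbC\not\equiv 0$ and real vectors span the complexification. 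The two-variable Weierstrass preparation then factors $\Phi_\bbC(V_0+z_1W_1+z_2W_2+V'')$ near the origin as a non-vanishing unit times a distinguished polynomial of total degree at most $p_0$, where $p_0$ is the vanishing order of $\Phi_\bbC$ on the complex plane $V_0+\bbC W_1+\bbC W_2$. Projecting $G(\rho)$ orthogonally onto the real codimension-two subspace $L:=(\bbR W_1\oplus\bbR W_2)^\perp$ accordingly has fibres contained in the real zero locus of this polynomial, a finite set of cardinality at most $p_0$.

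The principal obstacle is making the Fredholm determinant genuinely entire on the full space: $\tilde K(V')$ need not lie a priori in a fixed Schatten class, so one must quantify the smoothing gain from $\dH{2}$ up to $\dH{\fha}$ carefully and combine it with the exponential estimates \eqref{12.28} to control singular values uniformly in $V'$. Should the direct determinantal route prove too delicate, one can invoke instead a Weierstrass preparation theorem for holomorphic Fredholm families between Banach spaces, bypassing explicit determinants while supplying the same local finite-sheeted structure of $G(\rho)$ over complex codimension-one subspaces. A standard compactness argument then organizes the local bounds $p_0$ into the uniform integer $p$ demanded by the proposition.
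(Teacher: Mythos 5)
Your core argument coincides with the paper's: both define $G(\rho)$ as the locus where $d\beta_S$ fails to be invertible, both observe that $d\beta_S(V')=\beta_S^1+K(V')$ with $\beta_S^1$ an isomorphism and $K(V')$ compact because the nonlinear terms gain half a derivative (landing in $\dH{\fha}$, hence compactly included in $\dH2$), and both get the local isomorphism on the complement from the inverse/implicit function theorem. Where you diverge is in how the codimension-two bound is extracted. The paper does something much more economical: it restricts attention to the single one-complex-parameter family $z\mapsto\beta_S^1+\gamma(zV)$, notes this is analytic in $z$ and invertible at $z=0$, and invokes analytic Fredholm theory to conclude that invertibility fails only at isolated $z$; since a complex line is a real two-plane, this is taken as the proof that $G(\rho)$ has codimension at least two. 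Your route through a regularized determinant $\det_p$ (or, as your fallback, Weierstrass preparation for holomorphic Fredholm families) on two-complex-dimensional slices is heavier but actually delivers more: it addresses the local finite-to-one projection statement and the uniformity of $p$ that the Proposition literally asserts, points the paper's one-line conclusion glosses over. Your worry about placing $\tilde K(V')$ in a fixed Schatten ideal is legitimate but resolvable --- the half-derivative gain gives polynomial singular-value decay, so $\mathfrak{S}_p$ membership for some fixed large $p$ holds uniformly on bounded sets --- and your fallback via Fredholm-valued Weierstrass preparation sidesteps it entirely. The one place you should be cautious is the final ``standard compactness argument'' for a globally uniform $p$: bounded sets in $\dH{\hnp1}(B(\rho))$ are not compact, so uniformity of $p$ over all of $G(\rho)$ does not follow from a covering argument; read ``locally ... at most $p$-to-$1$'' as permitting $p$ to be locally constant, which is what both your argument and the paper's actually produce.
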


\begin{proof} The set $G(\rho)$ is defined to consist of those
$V\in\dH{\hnp1}(B(\rho))$ such that the derivative of $\beta_S$ with
respect to $V$ is not an isomorphism. Certainly \eqref{12.73} holds for
points in the complement of $G(\rho)$ by the implicit function theorem,
applied in the Sobolev space. Thus we need to show that $G(\rho)$ so
defined has codimension at least $2,$ since the density of the complement
certainly follows from this. The derivative of $\beta_S$ with respect to
$V$ is a linear map
\begin{equation}
\beta_1+\gamma(V):\dH{\hnp1}(B(\rho))\longrightarrow\dH2([-2\rho,2\rho]
\times\bbS^{n-1})
\label{12.74}\end{equation}
where $\beta_1$ is an isomorphism and $\gamma(V)$ depends analytically on $V$
and maps continuously into
$\dH{\fha-\epsilon}([-2\rho,2\rho]\times\bbS^{n-1}).$ If we consider simply 
the complex multiples of $V,$ i.e.\ just look at $\gamma(zV),$ we have
analyticity in $z.$ The invertibility of this operator reduces to a finite
dimensional problem. Since the map is known to be invertible at $z=0,$
invertibility can only fail at isolated values of $z.$ This proves the result.
\end{proof}

\begin{cor}\label{12.75} For each $\rho>0$ there is a dense
subset of $\dCI(B(\rho))$ near each point of which the backscattering
transform \eqref{12.73} is injective from $\dCI(B(\rho)).$
\end{cor}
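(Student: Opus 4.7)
The plan is to deduce the Corollary directly from Proposition~\ref{12.72} together with the density of $\dCI(B(\rho))$ in $\dH{\hnp1}(B(\rho))$.

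First I would set $U(\rho) = \dH{\hnp1}(B(\rho))\setminus G(\rho)$. By Proposition~\ref{12.72}, $U(\rho)$ is open in $\dH{\hnp1}(B(\rho))$ and, since $G(\rho)$ has codimension at least two, $U(\rho)$ is in particular dense (a set of codimension $\geq 2$ in a Banach space cannot contain an open ball, and the local projection description given in Proposition~\ref{12.72} even guarantees that $U(\rho)$ meets every neighbourhood of every point). At each $V'\in U(\rho)$, Proposition~\ref{12.72} supplies an $\epsilon=\epsilon(V')>0$ on which the restriction of $\beta_S$ to the $\epsilon$-ball in $\dH{\hnp1}(B(\rho))$ around $V'$ is an isomorphism onto its image, and in particular is injective.

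Next I would use the fact that $\dCI(B(\rho))$ is dense in $\dH{\hnp1}(B(\rho))$ (standard Friedrichs mollification inside a fixed compact support). Thus $D(\rho):=\dCI(B(\rho))\cap U(\rho)$ is the intersection of a dense subset with an open dense subset in $\dH{\hnp1}(B(\rho))$, hence is dense in $\dH{\hnp1}(B(\rho))$ and \emph{a fortiori} dense in $\dCI(B(\rho))$ for the $\dH{\hnp1}$ topology (which is the relevant topology since that is where $\beta_S$ is defined and continuous by Theorem~\ref{12.23}).

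Finally, for any $V'\in D(\rho)$ and the corresponding $\epsilon>0$, the injectivity on the $\dH{\hnp1}$-ball of radius $\epsilon$ around $V'$ immediately restricts to injectivity on $\{V\in\dCI(B(\rho));\|V-V'\|_{\hnp1}<\epsilon\}$, which is a $\dCI$-neighbourhood of $V'$ in the sense required by the statement. I do not foresee a genuine obstacle: the entire argument is a soft consequence of Proposition~\ref{12.72} plus density of smooth compactly supported functions in the Sobolev space, with no further estimate needed. The only mild subtlety is to make explicit that the $\dH{\hnp1}$-topology on $\dCI(B(\rho))$, rather than the $\CI$-topology, is what makes the local isomorphism statement useful; but since all bounds in Theorem~\ref{12.23} and Proposition~\ref{12.72} were formulated in the Sobolev norm $\|\cdot\|_{\hnp1}$, this identification is automatic.
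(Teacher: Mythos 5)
Your deduction is correct and is essentially what the paper intends: the corollary is stated with no separate proof precisely because it is the immediate consequence of Proposition~\ref{12.72} combined with the density of $\dCI(B(\rho))$ in $\dH{\hnp1}(B(\rho))$, which is exactly your argument. The only caveat is that you establish density of $D(\rho)=\dCI(B(\rho))\cap U(\rho)$ only in the $\dH{\hnp1}$ topology rather than in the usual $\CI$ topology of $\dCI(B(\rho))$; if the latter reading is intended, it follows from the mechanism already used in the proof of Proposition~\ref{12.72}: for fixed $V\in\dCI(B(\rho))$ the set of $z\in\bbC$ with $zV\in G(\rho)$ is isolated, so points of $\dCI(B(\rho))\setminus G(\rho)$ accumulate at $V$ along the line $zV$, giving density in any linear topology on $\dCI(B(\rho))$.
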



\end{document}